\newcommand{\transl}[2]{{^{#1}#2}}
\newcommand{\Lstpop}{\mathrm{Lstp}}
\newcommand{\Lstp}{\Lstpop}
\newcommand{\stabop}{\mathrm{Stab}}
\newcommand{\dcltL}{\dcl[\tL]}
\newcommand{\dclLL}{\dcl[\LL]}
\title{A short note on groups in separably closed valued fields}
\author{Silvain Rideau\thanks{Partially supported by ValCoMo (ANR-13-BS01-0006)}}
\date{\today}
\begin{document}

\maketitle

\begin{abstract}
In this note we show that groups with definable generics in a separably closed valued of finite imperfection degree can be embedded into groups definable in their algebraic closure.
\end{abstract}

In \cite{Hru-UniDim}, Hrushovski showed that a pro-definable group (i.e. a pro-definable set with a pro-definable groups law) in a stable theory is isomorphic to a pro-limit of definable groups. In other terms, the two natural notions of "groups in infinitely many variables" --- (pro-definable) groups, i.e. group objects in the category of definable groups, and pro-(definable group), an object in the pro-category of definable groups --- coincide in structure whose theory is stable. In \cite{HruRid-Meta}, it is shown that this result extends, in any theory, to any pro-definable group with a \(d\)-generic, that is a definable type concentrating on the group with boundedly many translates action of \(G\) (cf. \ref{dgen}).

The second result of \cite{Hru-UniDim} which is generalized in \cite{HruRid-Meta}, is the possibility of reconstructing groups out of generic data. The idea is initially due to Weil \cite{Wei-GpCh} in the setting of algebraic groups. It was then transposed to a more general topological setting in \cite{vdD-GpCh} and to the stable setting in \cite{Hru-UniDim}. Here, we will be considering the slightly more general notion of pro-definable group chunks which first appeared in \cite{Pil-DCFGp}. Put together, these two results show that for any theory \(T\), the inclusions between the categories of pro-limits of definable groups with \(d\)-generics, pro-definable groups with \(d\)-generics and group chunks over definable types, are equivalences. Note that for this equivalence to hold of all groups and not just the "connected" ones, we have to consider definable partial types.

Our goal in this note is to use those results to study groups definable in separably closed valued fields. In \cite{Pil-DCFGp}, Pillay showed, using the reconstruction of groups from generic data, as well as the fact that pro-definable groups in algebraically closed fields are pro-limits of definable groups, that every group definable in a differentially closed field of characteristic zero can be definably embedded in a group definable in the underlying algebraically closed field (i.e. an algebraic group). A similar result was then proved in \cite{BouDel-SCFGp} for the groups definable in separably closed fields of finite imperfection degree. In both these cases, both theories involved are stable and Hrushovski's tools from \cite{Hru-UniDim} can be used.

In this note we give an abstract version of these proofs, \ref{embed group}, by showing that, under certain hypothesis, if the definable closure in a theory \(T\) is bounded by the definable closure in a theory \(T_0\), then groups with definable generic in \(T\) embeds in groups definable in \(T_0\). This is, to the best of the author's knowledge, the only existing such embedding result that does not require \(T_0\) to be stable. This result is then applied, in \ref{gp SCVH}, to prove that all groups interpretable in a separably closed valued field of finite imperfection degree, with a \(d\)-generic, can be definably embedded in a group interpretable in the algebraic closure (as a valued field).

The paper is organized as follows. In Section\,\ref{sec:pro-def}, we discuss pro-definable sets. In Section\,\ref{sec:generics}, we define the notion of \(d\)-generics and describe some of their properties. In Section\,\ref{sec:gp ch}, we explain how to reconstruct a group out of generic data over a definable type. In Section\,\ref{sec:gp enrich} we give an abstract criterion for groups with definable generics to be definably embeddable in a group definable in a reduct. In Section\,\ref{sec:SCVH}, we apply those results to separably closed valued fields of finite imperfection degree.

\section{Pro-definable sets}
\label{sec:pro-def}

In what follows we will be mostly considering "definable sets in infinitely many variables". There are two equivalent formalisms: partial types in infinitely many variables or formal filtered projective limits of definable sets. We will, in this note, prefer the second formalism. Recall that an order is filtered if any two points have an upper bound. A pro-definable set is a family \((X_i)_{i\in I}\) of definable sets with \(I\) a filtered order, and for any \(i < j \in I\), a definable map \(f_{j,i}: X_j \to X_i\). We consider this family as the formal projective limit \(\projlim X_i\). If we want to specify the language, we will say that \(X\) is pro-\(\LL\)-definable meaning that all the \(X_i\) and the transition maps \(f_{j,i}\) are \(\LL\)-definable. A pro-definable map \(f :\projlim X_i \to \projlim Y_j\) is a family of definable maps \(f_j : X_{i_j} \to Y_j\) which is compatible with the projective system. In other words, it is an element of \(\injlim_i \projlim_j\Hom{X_i}{Y_j}\).

Note that if \(X = \projlim X_i\) is pro-definable, \(x_i\) is a tuple of variables sorted like \(X_i\) and \(x = (x_i)_{i\in I}\), then we can consider \(X\) as a partial type in the variable \(x\) (which states that \(x_i = X_i\) and \(f_{j,i}(x_j) = x_i\)). In particular, we can consider (partial) types \(p(x)\) which concentrate on \(X\). Following usual model theoretic terminology, when all the maps \(f_{j,i}\) of a given projective system \(\projlim X_i\) are injective then we say that \(\projlim X_i\) is \(\infty\)-definable. Note that in that case, we can and will consider that all the \(X_i\) are subsets of a given \(X_{i_0}\). It is then natural to consider that \(\projlim X_i\) is nothing else than the intersection of the \(X_i\).

Note that, by compactness, a pro-definable map between pro-definable sets is exactly a function whose graph is pro-definable. Also, a pro-definable map between \(\infty\)-definable sets is the restriction to an \(\infty\)-definable set of a definable map.

\section{Definable generics}
\label{sec:generics}

Let $T$ be any $\LL$-theory that eliminates imaginaries and $M\models T$ be sufficiently saturated and homogeneous and $A\subseteq M$. Let $(G,\cdot)$ be a pro-\(\LL(A)\)-definable group, i.e. a pro-\(\LL(A)\)-definable set with a pro-\(\LL(A)\)-definable group law.

\begin{definition}(definable partial type)
Let \(p(x)\) be a partial type over \(M\) (in the possibly infinite tuple of variables $x$). We say that $p$ is $\LL(A)$-definable if for every formula $\phi(x;y)$ there is an $\LL(A)$-formula $\theta(y)$, usually denoted $\defsc{p}{x}\,\phi(x;y)$, such that for all tuple $m\in M$:
\[p(x) \vdash \phi(x;m) \text{ if and only if } M\models\defsc{p}{x}\,\phi(x;m).\]
\end{definition}

As the definition above makes explicit, the partial types that appear in this note are considered to be closed under implication. In particular, if $p(x)$ is a partial type and $\phi(x;y)$ an $\LL$-formula, $\restr{p}{\phi}$ denotes $\{\phi(x;m) \mid p(x)\vdash \phi(x;m)\}$ and $\restr{p}{A}$ denotes $\{\phi(x)\in\LL(A) \mid p(x)\vdash \phi(x;m)\}$.

\begin{definition}($\transl{g}{p}$)
Let $p(x)$ be a partial type over $M$ concentrating on \(G\) and $g\in G(M)$. We define \[\transl{g}{p} := \{\phi(x;y)\mid p\vdash\phi(g\cdot x;y)\}.\]
\end{definition}

\begin{remark}
\begin{thm@enum}
\item The realizations of $\transl{g}{p}$ are exactly the elements of the form $g\cdot x$ for some $x\models p$.
\item If $p$ is $\LL(A)$-definable, then \(\transl{g}{p}\) is $\LL(Ag)$-definable and we can choose \(\defsc{\transl{g}{p}}{x}\,\phi(x;y)\) to be \(\defsc{p}{x}\,\phi(g\cdot x;y).\)
\item This operation defines an action of $G(M)$ on partial types over $M$ which concentrate on $G$.
\end{thm@enum}
\end{remark}

Following \cite{HruRid-Meta}, we want to consider groups with a definable generic (recall that $A\subseteq M$ is supposed to be small):

\begin{definition}[dgen]($d$-generic type)
Let $p(x)$ be a partial type over $M$ concentrating on $G$. We say that $p$ is a $d$-generic of $G$ over $A$ if for all $g\in G(M)$, $\transl{g}{p}$ is $\LL(A)$-definable.
\end{definition}

When we do not want to specify the (small) set of parameters $A$, we will simply say that $p$ is a $d$-generic in $G$.

\begin{remark}
The notion of genericity that is usually considered  in unstable contexts (see \cite{NewPet,HruPil-Nip}) is defined using forking: a $p(x)$ partial type over $M$ concentrating on $G$ is said to be $f$-generic over $A$ if for all $g\in G(M)$, $\transl{g}{p}$ does not fork over $A$. If $T$ is $\NIP$ and $A  = \acl(A)$, a complete $d$-generic type is nothing more than a definable $f$-generic type. Indeed, in an $\NIP$ theory, a complete type which is non forking over $A$ is \(\Lstp(A)\)-invariant and hence, if it is definable, its definable scheme is over $\acl(A) = A$.
\end{remark}

The main property of pro-definable groups with \(d\)-generics that we will be using is that pro-definable groups with $d$-generics are pro-limits of definable groups.

\begin{proposition}[star def gp prolim]({\cite[Proposition\ 3.4]{HruRid-Meta}})
Let $G$ be a pro-\(\LL(A)\)-definable group. Assume that $G$ admits a partial type $p$ $d$-generic over $A$. Then there exists a projective system of $\LL(A)$-definable groups $(H_{\alpha})_{\alpha\in A}$ and a pro-$\LL(A)$-definable group isomorphism $f : G\to H := \projlim_{\alpha} H_{\alpha}$.
\end{proposition}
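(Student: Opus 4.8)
The plan is to realize $G$ through its left-translation action on the $d$-generic $p$ and to package the definable data this produces into a cofinal system of honest definable groups. First I would fix, for each $\LL$-formula $\phi(x;y)$, the map $g\mapsto\defsc{\transl{g}{p}}{x}\,\phi(x;y)$, which is $\LL(Ag)$-definable uniformly in $g$ with scheme $\defsc{p}{x}\,\phi(g\cdot x;y)$ (see the Remark above); here is where $d$-genericity is essential, since it guarantees that \emph{every} translate is definable over the \emph{fixed} base $A$. Because $T$ eliminates imaginaries, the canonical parameter $e_\phi(g)$ of this $\phi$-definition lives in an $\LL(A)$-definable set $E_\phi$, and $g\mapsto e_\phi(g)$ is $\LL(A)$-definable. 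Letting $\phi$ range over all formulas, these assemble into a pro-$\LL(A)$-definable map $e:G\to Z:=\projlim_\phi E_\phi$, $g\mapsto\transl{g}{p}$, equivariant for the left-translation action of $G$, and with pro-$\LL(A)$-definable image. The identity $\transl{gh}{p}=\transl{g}{(\transl{h}{p})}$ shows that at each coordinate $e_\phi(gh)$ is an $\LL(A)$-definable function of $g$ and of finitely many coordinates $e_\psi(h)$; together with the fact that the pro-definable multiplication and inversion of $G$ factor through finite levels, this endows a cofinal family of coordinate quotients with partial definable group operations.

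Next I would promote these partial laws to genuine definable groups. Refining the index set so that each chosen coordinate closes up under the operation, and coding the resulting sets by elimination of imaginaries, one checks that on the image of $e$ the operation is associative and admits a neutral element and inverses; this yields a projective system of $\LL(A)$-definable groups $(H_\alpha)$ with group-homomorphism transition maps, together with a surjective pro-$\LL(A)$-definable homomorphism $G\to\projlim_\alpha H_\alpha$. This is the pro-definable incarnation of the classical passage from a group chunk to a group, now carried out coordinatewise and then glued along the projective system.

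The main obstacle is that left translation on $p$ alone need not be faithful: the kernel of $e$ is the stabilizer $\stab(p)$, so $e$ recovers only the homogeneous space $G/\stab(p)$, whose normal core may be nontrivial, and thus gives a quotient rather than $G$ itself. To obtain an isomorphism onto $\projlim_\alpha H_\alpha$, I would enrich the coordinate data so as to separate points of $G$ — for instance by recording, alongside $\transl{g}{p}$, the element $g\cdot a$ for a realization $a\models p$ taken generic over the relevant parameters, since right cancellation makes $g\mapsto g\cdot a$ injective. Carrying this extra datum through the coordinatization forces the common kernel of the system $(H_\alpha)$ to be trivial, which is exactly what makes the resulting $f:G\to\projlim_\alpha H_\alpha$ a pro-$\LL(A)$-definable group isomorphism. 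The delicate point, and the place where the full strength of $d$-genericity is again used, is to verify that this enrichment stays $\LL(A)$-definable and compatible with the group structure, rather than merely $\LL(Aa)$-definable.
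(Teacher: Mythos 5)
First, a point of comparison: the paper does not prove this proposition at all --- it is imported verbatim from \cite[Proposition\ 3.4]{HruRid-Meta} --- so the relevant benchmark is the argument in that reference (essentially Hrushovski's original one, via germs of translations). Measured against it, your proposal has a genuine gap, and it sits exactly at the step you yourself call ``the delicate point.''

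The coordinatization by translates cannot carry the construction. Because $p$ is $d$-generic over the \emph{small} set $A$, every translate $\transl{g}{p}$ is $\LL(A)$-definable, so by elimination of imaginaries each canonical parameter $e_\phi(g)$ lies in $\dcl(A)$, and by compactness each $E_\phi$ is then \emph{finite} (this is precisely the content of \cite[Proposition\ 3.2]{HruRid-Meta} that the paper invokes in the proof of \ref{embed group}). Consequently $e(g)=e(h)$ exactly when $g$ and $h$ lie in the same coset of $\mathrm{Stab}(p)$, a subgroup of \emph{bounded} index, and your first two paragraphs produce at most a bounded (level-by-level finite) quotient of $G$; in the extreme case of $\Gg_{a}$ in $\ACF[0]$ with $p$ the generic type, $\transl{g}{p}=p$ for every $g$ and $e$ is constant. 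All the content must therefore come from your ``enrichment,'' and the datum you add, $g\cdot a$ for a fixed realization $a\models p$, is only $\LL(Aa)$-definable; restoring $\LL(A)$-definability is not a verification to be carried out afterwards --- it is the whole theorem. The device that actually works is the \emph{germ} of left translation $\lambda_g$ on the definable type $p$: since $p$ is definable, the condition ``$p(x)\vdash g\cdot x=h\cdot x$'' is $\LL(A)$-definable in $(g,h)$ (coordinate by coordinate, through finitely many coordinates of $g$ at each level), so the germ is coded by an $\LL(A)$-definable imaginary; it separates points of $G$ (evaluate at a realization and cancel on the right), and composition of germs along the bounded orbit of $p$ is definable, which is what feeds the Weil-style closing-up that yields the definable groups $H_\alpha$ and makes $G\to\projlim_\alpha H_\alpha$ injective. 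Without this, or some equivalent $A$-definable point-separating datum, your construction recovers only a bounded quotient of $G$.
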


\begin{remark}
There is a classic counter-example to \ref{star def gp prolim} when the group does not have a $d$-generic. Let $M$ be an $\aleph_{0}$-saturated real closed field. The group $\mathcal{I}$ of infinitesimal elements $\{x\in M\mid\forall n\in\Zz_{>0},\,-\frac{1}{n} < x < \frac{1}{n}\}$ is an $\infty$-definable subgroup of the additive group $\Gg_{a}(M)$ but there is no proper definable subgroup of $\Gg_{a}$ containing $\mathcal{I}$. 
\end{remark}

\section{Group chunks}
\label{sec:gp ch}

Let us now consider group chunks, a fundamental tool to construct groups in model theory. Recall that if $p(x)$ is a partial type over $M$ and $f$ is a pro-\(\LL(M)\)-definable function defined on \(p(x)\), \[\push{f}{p}(y) := \{\phi(y;z)\mid \phi(f(x);z)\in p\}.\] If $p$ is $\LL(A)$-definable and $f$ is pro-$\LL(A)$-definable, then $\push{f}{p}$ is $\LL(A)$-definable and we can choose $\defsc{\push{f}{p}}{y}\,\phi(y;z) = \defsc{p}{x}\,\phi(f(x);z)$. Also if $p(x)$ and $q(y)$ are partial $\LL(A)$-definable types, let \[p\tensor q = \{\phi(x,y;m)\mid \forall a\models \restr{p}{Am}\,\forall b\models \restr{q}{A m a},\,M\models\phi(a,b;m)\}.\]
Note that $p\tensor q$ is also $\LL(A)$-definable and that we can choose:
\[\defsc{p\tensor q}{x y}\,\phi(x,y;s) := \defsc{p}{x}\,(\defsc{q}{y}\,\phi(x,y;s)).\]

Note that we are not considering group chunks on complete types, but on partial types (as does Wagner in \cite[Theorem 4.7.1]{Wag-Simple}). Restricting oneself to complete types only allows generic reconstruction of connected groups. Considering partial types allows the generic reconstruction of non-connected groups.

\begin{definition}[gp chunk](Group chunk)
Let $x$ be a possibly infinite tuple and $p(x)$ an $\LL(A)$-definable partial type. A pro-$\LL(A)$-definable group chunk over $p$ is a triple $(F,H,K)$ of pro-$\LL(A)$-definable maps defined on $p^{\tensor 2}$ such that:
\begin{thm@enum}
\item\label{gp ch:inv} For all \(a\models\tprestr{\pi}{A}\), \(\push{(F_a)}{p} = p\), where \(F_a(x) = F(a,x)\);
\item\label{gp ch:inj} $p^{\tensor 2}(x,y) \vdash H(x,F(x,y)) = y$ and $p^{\tensor 2}(x,y) \vdash K(F(x,y),y) = x$;
\item\label{gp ch:assoc} $p^{\tensor 3}(x,y,z) \vdash F(x,F(y,z)) = F(F(x,y),z)$.
\end{thm@enum} 
\end{definition}

\begin{remark}
The data describing a group chunk is somewhat redundant. Condition\,\ref{gp ch:inj} could be replaced by:
\begin{itemize}
\item[(ii)'] For all $(a,b)\models p^{\tensor 2}$, $b\in\dcl(A,a,F(a,b))$ and $a\in\dcl(A,F(a,b),b)$.
\end{itemize}
and not mention $H$ and $K$.
\end{remark}

\begin{example}
Let $G$ be a pro-definable group with a $d$-generic $p$, by \cite[Remark\ 3.3]{HruRid-Meta}, we may assume that $p$ is $G(M)$-invariant. Then the group law induces a pro-definable group chunk on $p$.
\end{example}

The converse is also true:

\begin{proposition}[gp from chunk]({\cite[Proposition\ 3.15]{HruRid-Meta}})
Let $p(x)$ be a partial $\LL(A)$-definable type and $(F,H,K)$ be a pro-$\LL(A)$-definable group chunk over $p$. Then there exists a pro-$\LL(A)$-definable group $(G,\cdot)$ and a pro-$\LL(A)$-definable one-to-one function $f$ such that $\push{f}{p}$ is an $\LL(A)$-definable $G(M)$-invariant type and $p^{\tensor 2}(x,y) \vdash f(F(x,y)) = f(x)\cdot f(y)$.
\end{proposition}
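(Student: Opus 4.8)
The plan is to build $G$ as a group of germs at $p$ of the ``translation'' maps $F_a$ together with their generic inverses, and to let $f(a)$ be the germ of $F_a$; elimination of imaginaries then turns these germs into genuine pro-$\LL(A)$-definable elements. Write $F_a(x):=F(a,x)$ and $H_a(x):=H(a,x)$. Condition\,\ref{gp ch:inv} says that each $F_a$ pushes $p$ forward to $p$, so $F_a$ is a generic self-map of $p$, while condition\,\ref{gp ch:inj} gives $H(x,F(x,y))=y$, i.e. $H_a\circ F_a=\mathrm{id}$ generically, so that $H_a$ is a generic two-sided inverse of $F_a$. I would first record that the relation $E(a,a')$ defined by $\defsc{p}{x}\bigl(F(a,x)=F(a',x)\bigr)$ is an $\LL(A)$-definable equivalence relation on realizations of $p$ (transitivity uses a realization of $p$ generic over the parameters involved), and that, by \ref{gp ch:inj}, $F_a=_p F_{a'}$ forces $a=a'$ (evaluate at $y\models\restr{p}{Aaa'}$ and apply $K(F(x,y),y)=x$); hence $a\mapsto f(a)$, where $f(a)$ is the canonical code of the germ of $F_a$ at $p$ furnished by elimination of imaginaries, is one-to-one.

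For $G$ itself I would take the set of germs at $p$ of the composites $F_a\circ H_b$ with $a,b\models p$ --- morally the quotients ``$ab^{-1}$''. This set contains an identity germ (the germ of $F_a\circ H_a$) and is stable under inversion (the inverse of the germ of $F_a\circ H_b$ is that of $F_b\circ H_a$); moreover each $f(a)$ lies in $G$, since for $d\models\restr{p}{Aa}$ one has $F_a=_p F_{F(a,d)}\circ H_d$ by \ref{gp ch:assoc} and \ref{gp ch:inj}. The product is composition of germs: given two elements one re-represents them over a common ``middle'' generic and contracts using $H_b\circ F_b=\mathrm{id}$, so that the germ of $F_a\circ H_b$ times the germ of $F_b\circ H_d$ is the germ of $F_a\circ H_d$. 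With $f(a)$ the germ of $F_a$, associativity \ref{gp ch:assoc} shows that for $(a,b)\models p^{\tensor 2}$ the maps $F_a\circ F_b$ and $F_{F(a,b)}$ have the same germ, whence $f(F(a,b))=f(a)\cdot f(b)$, the required identity.

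The heart of the proof, and the step I expect to be the main obstacle, is to show that this composition is well defined on germs and associative, i.e. that the product germ depends only on the two input germs and not on the representatives chosen. This is the standard germ-amalgamation computation: one selects representatives sharing a suitably generic middle element (generic over all parameters in play), rewrites the relevant composites using \ref{gp ch:assoc} and \ref{gp ch:inj}, and then transfers the generic equalities so obtained into equalities of germs by invoking the definability of $p$, so that an equation holding at a generic realization holds $\defsc{p}{x}$-definably and hence at the level of germs. The same bookkeeping rests on the genericity facts that the $F$-product of two independent generics is again generic (condition\,\ref{gp ch:inv}) and stays independent from each factor; keeping track of these independences uniformly, rather than any single hard estimate, is what makes this step delicate.

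Finally I would collect the remaining points. The equivalence relation cutting out germs, the composition, the inversion and the map $f$ are all uniformly $\LL(A)$-definable on the relevant powers of $p$, so, as $T$ eliminates imaginaries, $(G,\cdot)$ is a pro-$\LL(A)$-definable group and $f$ is pro-$\LL(A)$-definable and one-to-one. The type $\push{f}{p}$ is $\LL(A)$-definable as the pushforward of the $\LL(A)$-definable type $p$ along the pro-$\LL(A)$-definable map $f$. For $G(M)$-invariance it suffices, since $G(M)$ is generated by the germs $f(a)$ with $a\models p$, to check $f(a)\cdot\push{f}{p}=\push{f}{p}$: for $a'\models\restr{p}{Ma}$ one has $f(a)\cdot f(a')=f(F(a,a'))$ with $F(a,a')\models p$ over $M$ by \ref{gp ch:inv}, so the translate of $\push{f}{p}$ by $f(a)$ is again $\push{f}{p}$, and likewise for the inverse $f(a)^{-1}$, which is the germ of $H_a$.
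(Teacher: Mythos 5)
The paper itself gives no proof of this proposition---it is quoted verbatim from \cite[Proposition~3.15]{HruRid-Meta}---so there is nothing in-paper to compare against; your proposal is the standard Hrushovski--Weil germ construction (the route of Wagner's Theorem~4.7.1, which the paper itself points to), which is precisely how the cited reference proceeds, and your outline is correct, including the honest isolation of the one delicate step (well-definedness of composition on germs via re-representation over a common generic). One small caution: realizations of $p$ live only in an elementary extension of $M$, so $G(M)$ is not literally ``generated by the germs $f(a)$ with $a\models p$''; the invariance of $\push{f}{p}$ should be argued by writing an arbitrary $g\in G(M)$ as $f(a)\cdot f(b)^{-1}$ with $a,b$ realizing $p$ in a larger model and then contracting as you do.
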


Furthermore, there is an equivalence of categories between groups with $d$-generics and pro-definable group chunks over definable types:

\begin{proposition}[gp mor from chunk]({\cite[Proposition\ 3.16]{HruRid-Meta}})
Let $(G,\cdot)$ and $(H,\cdot)$ be two pro-$\LL(A)$-definable groups, $p$ be a $G(M)$-invariant partial type over $M$ and $f_{0}$ be a pro-$\LL(A)$-definable function. If $\push{(f_0)}{p(x)} \vdash y \in H$ and $p^{\tensor 2}(x,y)\vdash f_0(x\cdot y) = f_0(x)\cdot f_0(y)$, then there exists a unique pro-$\LL(A)$-definable group morphism $f : G \to H$ such that for all $p\in P$, $p(x)\vdash f(x) = f_{0}(x)$.

Moreover, if $f_{0}$ is one-to-one, so is $f$.
\end{proposition}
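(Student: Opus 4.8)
The plan is to define $f$ on all of $G(M)$ by spreading out $f_0$ along generic translates: for $g\in G(M)$, pick $a\models\restr{p}{Ag}$ and set $f(g):=f_0(g\cdot a)\cdot f_0(a)^{-1}$. Since $p$ is $\LL(A)$-definable (as is implicit in the hypothesis, $p^{\tensor 2}$ being used) and $G(M)$-invariant, one has $g\cdot a\models\transl{g}{p}=p$, so $f_0$ is defined at $g\cdot a$; moreover, when $g\models p$ the pair $(g,a)$ realizes $p^{\tensor 2}$, whence $f_0(g\cdot a)=f_0(g)\cdot f_0(a)$ and $f(g)=f_0(g)$. Thus, once the construction is shown to be sound, $f$ will automatically agree with $f_0$ on $p$, i.e. $p(x)\vdash f(x)=f_0(x)$.

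The heart of the matter is to show that $f(g)$ does not depend on the choice of $a$. First I would prove: if $a\models\restr{p}{Ag}$ and $a'\models\restr{p}{Aga}$, then $f_0(g\cdot a)\cdot f_0(a)^{-1}=f_0(g\cdot a')\cdot f_0(a')^{-1}$. The trick is to factor $a'=a\cdot e$ with $e:=a^{-1}\cdot a'$; using $G(M)$-invariance one checks that $e\models\restr{p}{Aga}$, so that both $(a,e)$ and $(g\cdot a,e)$ realize $p^{\tensor 2}$ (here $g\cdot a\models p$ again by invariance, and $e$ is generic over $g\cdot a$ since $g\cdot a\in\dcl(Aga)$). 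Applying the homomorphism hypothesis to these two pairs yields $f_0(a')=f_0(a)\cdot f_0(e)$ and $f_0(g\cdot a')=f_0(g\cdot a)\cdot f_0(e)$, and the factors $f_0(e)$ cancel. For two arbitrary $a_1,a_2\models\restr{p}{Ag}$ one then compares each with a common $a'\models\restr{p}{Aga_1a_2}$. This genericity bookkeeping --- repeatedly invoking that $G(M)$-invariance turns products of a group element with a generic into a generic, and that translating a generic by a parameter in its base keeps it generic --- is what I expect to be the main obstacle.

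Well-definedness gives $f(g)\in\dcl(Ag)$: any automorphism fixing $Ag$ pointwise sends one admissible $a$ to another, hence fixes $f(g)$, and since $T$ eliminates imaginaries this places $f(g)$ in $\dcl(Ag)$. Definability of $p$ then turns the graph $\{(g,z)\mid\restr{p}{Ag}(x)\vdash z=f_0(g\cdot x)\cdot f_0(x)^{-1}\}$ into a pro-$\LL(A)$-definable set, so $f$ is a pro-$\LL(A)$-definable function. To see $f$ is a homomorphism, fix $g,h$ and a single $a\models\restr{p}{Agh}$ and compute $f(h)=f_0(h\cdot a)\cdot f_0(a)^{-1}$, $f(g)=f_0(g\cdot h\cdot a)\cdot f_0(h\cdot a)^{-1}$ (legitimate since $h\cdot a\models\restr{p}{Ag}$ by invariance) and $f(g\cdot h)=f_0(g\cdot h\cdot a)\cdot f_0(a)^{-1}$; multiplying, the middle terms telescope to give $f(g)\cdot f(h)=f(g\cdot h)$.

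Finally, uniqueness is immediate: if $f'$ is another such morphism, then for $g\in G(M)$ and $a\models\restr{p}{Ag}$ both $a$ and $g\cdot a$ realize $p$, so $f$ and $f'$ coincide with $f_0$ on them, giving $f(g)=f(g\cdot a)\cdot f(a)^{-1}=f'(g\cdot a)\cdot f'(a)^{-1}=f'(g)$. For the last assertion, if $f_0$ is one-to-one and $f(g)=1$, then $f_0(g\cdot a)=f_0(a)$ with $g\cdot a,a\models p$, whence $g\cdot a=a$ and $g=1$; thus $f$ has trivial kernel and is injective.
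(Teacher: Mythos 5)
Your argument is correct, and it is the classical Weil--Hrushovski procedure for extending a generically given morphism: define $f(g)=f_0(g\cdot a)\cdot f_0(a)^{-1}$ for $a\models\restr{p}{Ag}$, check independence of the choice of $a$ by comparing with a further generic, deduce $f(g)\in\dcl(Ag)$ and pro-definability of the graph from the definability of $p$, and get the homomorphism property by telescoping. Note that the paper itself contains no proof to compare against: the proposition is imported from \cite[Proposition~3.16]{HruRid-Meta}, with only the parenthetical remark that injectivity ``is easy to check'' --- and your kernel argument is exactly that easy check. One point of precision: when you say that $(g,a)$ (or $(a,e)$, $(g\cdot a,e)$) ``realizes $p^{\tensor 2}$'', what you actually have is a realization of $\restr{(p^{\tensor 2})}{A}$; this suffices because every identity you transport ($f_0(x\cdot y)=f_0(x)\cdot f_0(y)$, $f_0(x)\in H$, $f(x)=f_0(x)$) is an $\LL(A)$-condition and the partial types here are closed under implication, so these conditions already belong to the restriction to $A$. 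Your genericity bookkeeping (that $e=a^{-1}\cdot a'$ realizes $\restr{p}{Aga}$, that $h\cdot a\models\restr{p}{Ag}$ when $a\models\restr{p}{Agh}$, etc.) does go through using $G(M)$-invariance of $p$ in the form $\transl{h}{p}=p$ together with $\dcl$-monotonicity of the base, so the proof is complete as sketched.
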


Injectivity is not proved in \cite{HruRid-Meta} but is easy to check (and follows, in fact from the equivalence of categories).

\section{Groups in enrichments}
\label{sec:gp enrich}

Let us now use all of our tools to show that certain groups with a $d$-generic are in fact definable in a reduct, provided the group law is not too far from being definable in the reduct.

\begin{proposition}[embed group]
Let $\LL\subseteq \tL$ be two languages, $\Real$ be the set of $\LL$-sorts, $T$ be an $\LL$-theory which eliminates quantifiers and imaginaries and $\tT\supseteq T_\forall$ be an $\tL$-theory. Let \(\tM\models\tT\) be sufficiently saturated and homogeneous. Let \(M\models T\) containing \(\tM\) and such that any automorphism of \(\tM\) extends to an automorphism of \(M\). Let \(\tA\subseteq\tM\) be such that $\Real(\dcltL(\tA)) = \Real(\tA)=: A$. Let $(G,\cdot)$ be an $\tL(\tA)$-definable group. Assume:
\begin{thm@enum}
\item\label{hyp:gen} The group \(G\) has a $d$-generic type $p\in\TP<\tL>(\tM)$ over $\acltLeq(\tA)$;
\item\label{hyp:tame dcl} There exists a pro-$\tL(\tA)$-definable one-to-one function $f$ and pro-$\LL(A)$-definable functions $m$ and $i$ such that for all $g_{1}$, $g_{2}\in G$, $f(g_{1}\cdot g_{2}) = m(f(g_{1}),f(g_{2}))$ and $f(g_{1}^{-1}) = i(f(g_{1}))$;
\item\label{hyp:ext def} For any \(\tL(\tM)\)-definable $p\in\TP<\tL>(\tM)$, there exists $q_p\in\TP<\LL>(M)$ which is \(\LL(\tM)\)-definable and such that \(\restr{p}{\LL} = \restr{q}{\tM}\). Moreover if \(f\) is a (pro)-\(\LL(\tM)\)-definable function defined on \(p\), \(\tsig\in\aut[\tL](\tM)\) and \(\sigma\in\aut[\LL](M)\) extends it, then \(\sigma(\push{f}{q_p}) = q_{\sigma(\push{f}{p})}\).
\item\label{hyp:descent} For all \(e\in\dclLL(\tM)\), there exists \(c\in \tM\) such that \(\dclLL(e) = \dclLL(c)\).
\end{thm@enum}

Then there exists an $\LL(A)$-definable group $H$ (in \(\tM\)) and an $\tL(\tA)$-definable one-to-one group morphism $h : G(\tM)\to H(\tM)$.
\end{proposition}

\begin{proof}
Let $A := \Real(\tA)$, $P := \{\tsig(\transl{g}{p})\mid g\in G(\tM)\text{ and }\tsig\in\aut[\tL](\tM)[\tA]\} = \{\transl{g}{(\sigma(p))}\mid g\in G(\tM)\text{ and }\tsig\in\aut[\tL](\tM)[\tA]\}$, \(Q := \{q_{\push{f}{r}} \mid r\in P\}\) and $q = \bigcap_{s\in Q} s$. Since every $s\in Q$ is $\LL(\tM)$-definable and, by \cite[Proposition\ 3.2]{HruRid-Meta}, there are only finitely many \(\phi\)-types involved for any formula \(\phi\), \(q\) is also $\LL(\tM)$-definable. Moreover, for all \(\tsig\in\aut[\tL](\tM)[\tA]\), all \(\sigma\in\aut[\LL](M)\) extending it and all \(s\in Q\), \(\sigma(s)\in Q\). It follows that \(\sigma(q) = q\). As the canonical basis of \(q\) can be assumed to be in \(\tM\) by Hypothesis\,\ref{hyp:descent}, \(q\) is \(\LL(A)\)-definable.

Let $m_{1}(x,y) := m(i(x),y)$ and $m_{2}(x,y) := m(x,i(y))$.

\begin{claim}
The tuple $(m,m_{1},m_{2})$ is a pro-$\LL(A)$-definable group chunk over $q$.
\end{claim}

\begin{proof}
For all \(g\in G(\tM)\), \(q_{\push{f}{(\transl{g}{p})}} = \push{(m_{f(g)})}{q_p}\), and thus \(\push{(m_{f(g)})}{q} = q\), i.e. for every \(\LL\)-formula \(\phi(x;t)\), \(\models\forall t\,(\defsc{q}{x}\,\phi(m(f(g),x);t)\iffform\defsc{q}{x}\,\phi(x;t)) := \theta(g)\). Since \(\theta\) is an \(\LL(A)\)-formula, it is in \(q\). This is exactly Condition\,\ref{gp ch:inv}.

For all $x$, $y$ and $z\in G$, we have:
\[m(f(x),m(f(y),f(z))) = f(x\cdot y\cdot z) = m(m(f(x),f(y)),f(z)),\]
\[\begin{array}{rcl}
m_{1}(f(x),m(f(x),f(y))) &=& m(i(f(x)),m(f(x),f(y)))\\
&=& m(f(x^{-1}),f(x\cdot y))\\
&=& f(x^{-1}\cdot x\cdot y)\\
&=& f(y).
\end{array}\]
Similarly, $m_{2}(m(f(x),f(y)),f(y)) = f(x)$. It follows that Conditions\,\ref{gp ch:inj} and \ref{gp ch:assoc} also hold.
\end{proof}

By \ref{gp from chunk}, there exists a pro-$\LL(A)$-definable group
$(L,\cdot)$ and a pro-$\LL(A)$-definable one-to-one function $l$ such
that $q^{\tensor 2}(x,y) \vdash l(m(x,y)) = l(x)\cdot l(y)$ and $\push{l}{q}$ is an $\LL(A)$-definable $L(N)$-invariant type. By \ref{star
  def gp prolim}, there exists a projective system of
$\LL(A)$-definable groups $(H_{\beta},\cdot)_{\beta\in B}$ and a
pro-$\LL(A)$-definable groups isomorphism $j$ between $L$ and
$\projlim H_{\beta}$. For all $\beta\in B$, let $\pi_{\beta} :
\projlim H_{\beta} \to H_{\beta}$ be the canonical projection and
$h_{\beta} = \pi_{\beta}\comp j\comp l\comp f$. Since $\projlim
h_{\beta}$ is one-to-one, by compactness (in \(\tM\)), there exists
$\beta_0 \in B$ such that $h_{\beta_0}$ is already one-to-one.

Let \(u := \bigcap_{r\in P}r\). Note that \(\push{f}{u} \vdash \restr{q}{M}\). By Hypothesis\,\ref{hyp:descent}, we can find an \(\LL(A)\)-definable bijection \(k:H_{\beta_0} \to H'\) such that for any \(a\models u\), \(k(h_{\beta_0}(a))\in\tM\). Moreover, \(H'\) can be made into an \(\LL(A)\)-definable group such that \(k\) is a group isomorphism. Let \(h' = k \comp h_{\beta_0}\). Then $h'$ is $\tL(\tA)$-definable and, $t^{\tensor 2}(x,y)\vdash h'(x \cdot y) = h'(x)\cdot h'(y)$. The partial type \(u\) is \(G(\tM)\)-invariant and \(\LL(\tA)\)-definable type. Therefore, by \ref{gp mor from chunk}, there exists an $\tL(\tA)$-definable one-to-one group morphism $G(\tM) \to H'(\tM)$.
\end{proof}

\begin{remark}
\begin{thm@enum}
\item Note that the above proposition requires the existence of a complete $d$-generic type, but the proof actually makes use of partial $d$-generics to be able to work over bases that are not models or even algebraically closed. Note also that, in a definable group, the existence of a partial $d$-generic is an empty assumption since the group itself is a partial $d$-generic.
\item Instead of assuming that $m$ and $i$ exist, by compactness, it suffices to know that:
\begin{itemize}
\item[(ii')] There exists a pro-$\tL(\tA)$-definable one-to-one function $f$ such that for all $g_{1}$ and $g_{2}\in G$, $f(g_{1}\cdot g_{2})\in\dclLL(A,f(g_{1}),f(g_{2}))$ and $f(g_{1}^{-1})\in \dcl[\LL](A,f(g_{1}))$.
\end{itemize}
\item Note that our setting is slightly more complicated than often considered since \(\tT\) only contains \(T_\forall\) and not \(T\) itself. If we assume that \(T\subseteq\tT\), then Hypothesis\,\ref{hyp:descent} is trivial. Moreover, taking \(\tM = M\), we may replace Hypothesis\,\ref{hyp:ext def} by:
\begin{itemize}
\item[(iii')] For any \(\tL(\tM)\)-definable $p\in\TP<\tL>(\tM)$, \(\restr{p}{\LL}\) is \(\LL(\tM)\)-definable.
\end{itemize}
\end{thm@enum}
\end{remark}

As a first corollary, let us reprove the aforementioned result about groups definable in differentially closed fields of characteristic zero. Recall that $\Lrg$ is the language of rings and $\LDrg := \Lrg\cup\{\D\}$ is the language of differential rings.

\begin{corollary}[DCF]
Let $K\models\DCF[0]$, $k \substr K$ a differential field and $G$ an $\LDrg(k)$-definable group, then $G$ embeds $\LDrg(k)$-definably into an $\Lrg(k)$-definable group.
\end{corollary}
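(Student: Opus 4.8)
The plan is to apply Proposition \ref{embed group} with $\LL = \Lrg$, $\tL = \LDrg$, $T = \mathrm{ACF}_{0}$ and $\tT = \DCF[0]$. Since every model of $\DCF[0]$ has an algebraically closed underlying field, we have $T\subseteq\tT$; in particular $\tT\supseteq T_\forall$, the theory $\mathrm{ACF}_{0}$ eliminates quantifiers and imaginaries, and by the last remark of the section Hypothesis \ref{hyp:descent} becomes automatic while we may take $\tM = M$ to be a sufficiently saturated and homogeneous model of $\DCF[0]$ (which is then also a model of $\mathrm{ACF}_{0}$) and replace Hypothesis \ref{hyp:ext def} by the simpler (iii'). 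Working inside $\tM$ and enlarging $k$ if necessary, I set $\tA = k$; since $k$ is already a differential field, $\dcltL(\tA) = k$, so $A := \Real(\tA) = k$ is its underlying field.

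Hypothesis \ref{hyp:gen} holds because $\DCF[0]$ is stable: the $\LDrg(k)$-definable group $G$ has a generic type $p$, which is definable, and whose finitely many translates $\transl{g}{p}$ are all definable over $\acltLeq(k) = \acltLeq(\tA)$; thus $p$ is a complete $d$-generic over $\acltLeq(\tA)$. Hypothesis (iii') --- that the $\Lrg$-restriction of any $\LDrg(\tM)$-definable type is $\Lrg(\tM)$-definable --- holds because $\mathrm{ACF}_{0}$ is likewise stable, so every complete $\Lrg$-type over the model $\tM$ is definable over $\tM$, and $\restr{p}{\Lrg}$ is such a type.

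The heart of the matter is Hypothesis \ref{hyp:tame dcl}, which I verify in the form (ii') using prolongations. Write $G\subseteq\tM^{n}$ as an $\LDrg(k)$-definable group; by quantifier elimination for $\DCF[0]$ its multiplication and inversion are piecewise given by differential-rational functions over $k$, of bounded order. Let $f$ be the pro-$\LDrg(k)$-definable infinite prolongation $g\mapsto (g,\D g,\D^{2}g,\dots)$, which is one-to-one since $g$ is its zeroth coordinate. The key observation is that, once all derivatives of $g_{1}$ and $g_{2}$ have been recorded, no further differentiation is needed to express the successive derivatives of a differential-rational expression in $g_{1}$ and $g_{2}$: by Leibniz and the chain rule, each such derivative is a rational function over $k$ of finitely many of the recorded derivatives. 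Hence every coordinate of $f(g_{1}\cdot g_{2})$ and of $f(g_{1}^{-1})$ lies in $\dclLL(k, f(g_{1}), f(g_{2}))$, which is exactly (ii'); equivalently, prolongation turns the differential-rational group law into pro-$\Lrg(k)$-definable maps $m$ and $i$.

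All hypotheses being met, Proposition \ref{embed group} yields an $\Lrg(k)$-definable group $H$ and an $\LDrg(k)$-definable one-to-one group morphism $h : G(\tM)\to H(\tM)$. Since $H$ and $h$ are defined over $k$ and the assertion that $h$ is an injective group homomorphism into $H$ is a single $\LDrg(k)$-sentence, it transfers, by completeness of $\DCF[0]$ over $k$, to every model of $\DCF[0]$ containing $k$, in particular to $K$; this is the desired $\LDrg(k)$-definable embedding of $G$ into the $\Lrg(k)$-definable (i.e.\ algebraic) group $H$. The main obstacle is the verification of (ii'): one must check that the orders of the derivatives occurring in the (piecewise) differential-rational group law are bounded, so that the prolongation genuinely produces $\Lrg$-definable rather than merely $\LDrg$-definable data --- this is exactly where Pillay's prolongation idea, together with the stability of both theories, enters.
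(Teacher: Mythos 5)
Your proposal is correct and takes essentially the same route as the paper: apply \ref{embed group} with the prolongation $\prol[\omega]$ furnishing Hypothesis\,\ref{hyp:tame dcl} (the paper condenses your Leibniz-rule computation into the identity $\dcl[\LDrg](C) = \dcl[\Lrg](\prol[\omega](C))$), $\omega$-stability of $\DCF[0]$ furnishing the $d$-generic over $\acl[\LDrg](k)$, and stability of $\ACF[0]$ furnishing (iii'). The remaining differences are purely expository (e.g.\ your final transfer step from the saturated model back to $K$, which the paper leaves implicit).
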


\begin{proof}
Note that for all $C\subseteq K\models\DCF[0]$, $\dcl[\LDrg](C) = \dcl[\Lrg](\prol[\omega](C))$. In particular, we have that $\dcl[\LDrg](k) = k$ and the multiplication and inverse in $G$ are of the right form to apply \ref{embed group}. As $\DCF[0]$ and $\ACF[0]$ are \(\omega\)-stable, any type of maximal Morley Rank in $G$ is $d$-generic over $\acl[\LDrg](k)$ and, since \(\ACF[0]\) is stable, the reduct of a \(\DCF[0]\)-type is obviously definable. Applying \ref{embed group}, we find an $\LDrg(k)$-definable embedding of $G$ into an $\Lrg(k)$-definable group $H$.
\end{proof}

\begin{remark}
\begin{thm@enum}
\item It follows from \cite[Théorème\,3.7.(iii) and Corollaire 3.13.(i)]{SGA-Art-Weil} that every $\Lrg(k)$-definable group chunk is $\Lrg(k)$-definably isomorphic to the group chunk of an algebraic group over $k$. In particular the group $G$ in \ref{DCF} is $\LDrg(k)$-definably embedded in an algebraic group over $k$.
\item The result obtained here is slightly more general than the one in \cite{Pil-DCFGp}. Indeed, Pillay only proves it for connected groups. The reduction to connected groups requires to work over a model of \(\DCF_0\) to be able to pick points in every coset of the connected component and thus recover the whole group. The above proof circumvents this issue by working directly with partial types and non-connected groups and allows to obtain the algebraic group and the isomorphism over any differential field.
\end{thm@enum}
\end{remark}

\section{Separably closed valued fields}
\label{sec:SCVH}

\ref{embed group} can also be applied to separably closed fields of finite imperfection degree, but we prefer to give a similar example in which the generalization of the above results to the unstable context is necessary: separably closed valued fields. We will be needing a few preliminary results.

The geometric language \(\LG\) for valued fields consists of a sort \(\K\) interpreted as the field itself, sorts \(\Latt[n]\) interpreted as \(\GL{n}(\K)/\GL{n}(\Val)\) (where \(\Val\) denotes the valuation ring) and \(\Tor[n]\) interpreted as \(\GL{n}(\K)/\GL{n,n}(\Val)\) (where \(\GL{n,n}(\Val)\) consists of those matrices in \(\GL{n}(\Val)\) whose last column modulo the maximal ideal \(\Mid\) consist of only \(0\) except for a \(1\) on the diagonal). The geometric language has the field language on \(\K\) and the canonical projections \(\sigma_n:\K<n^2> \to \Latt[n]\) and \(\tau_n : \K<n^2>\to \Tor[n]\). Actually for technical reasons, we want the \(\LG\)-theory \(\ACVFG\) of algebraically closed valued fields to eliminate quantifiers so the \(\LG\) we consider here is the Morleyization of the language we just described. For a more throughout description of the geometric language, the reader can refer to \cite{HasHruMac-ACVF}.

The main reason one introduces the geometric sorts is the following theorem:

\begin{theorem}(\cite[Theorem 1.0.1]{HasHruMac-ACVF})
The \(\LG\)-theory \(\ACVFG\) of algebraically closed valued fields eliminates imaginaries.
\end{theorem}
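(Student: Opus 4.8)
The statement is the elimination-of-imaginaries theorem of \cite{HasHruMac-ACVF}, so the plan is to reproduce the shape of their argument. The overall strategy would be to split full elimination of imaginaries into two ingredients and prove each separately: (a) that $\ACVFG$ codes finite sets, i.e.\ eliminates finite imaginaries into the geometric sorts, and (b) \emph{weak} elimination of imaginaries, namely that every $\LG(\tM)$-definable set $X$ admits a geometric tuple $c_X$ (a code) that is definable from the canonical parameter of $X$, with the canonical parameter in turn algebraic over $c_X$. Since weak elimination together with the coding of finite sets yields full elimination, establishing (a) and (b) suffices. For (a) I would combine the elimination of finite imaginaries in the residue field $k$ (an algebraically closed field, via symmetric functions) and in the value group $\Gamma$ (a divisible ordered abelian group, where a finite set is coded by its extrema and cardinality) with a coding of finite sets of balls; the sorts $\Latt[n]$ and $\Tor[n]$ are engineered precisely so that this lifts to finite sets of field tuples.

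For (b) the key reduction is an induction on the number of field variables, with the coding of germs of definable functions as its engine. Given $X\subseteq\K^{n+1}$, I would fibre over the first $n$ coordinates, so that $a\mapsto c_{X_a}$, sending a tuple to the code of the corresponding fibre, is, by the induction hypothesis, a definable function into the geometric sorts; coding $X$ then reduces to coding the graph of this function, hence to coding germs of definable functions valued in geometric sorts. This is where the structure theory of $\ACVFG$ enters: the existence of invariant extensions of types, the stable embeddedness and orthogonality of $k$ and $\Gamma$, and the domination of $\ACVFG$ by its stable and o-minimal parts. Concretely, the germ of such a function at a suitable invariant (generic) type is controlled by its $k$- and $\Gamma$-components, each of which is already coded by stable, resp.\ o-minimal, elimination, and one then descends the resulting code into the definable closure of the canonical parameter of $X$.

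The geometric heart, and the base case $n=1$ of the induction, is the one-variable situation. Over a model, a definable subset of $\K$ is a finite Boolean combination of \emph{Swiss cheeses} (a ball with finitely many proper sub-balls removed), so by (a) it suffices to code a single open or closed ball. Here the choice of sorts pays off: a $\Val$-lattice in $\K^2$ encodes a closed ball through the identification of $\{x : v(x-a)\geq\gamma\}$ with the lattice spanned by suitable columns, so closed balls are coded in $\Latt[2]$ and open balls (with their reductions) in $\Tor[2]$, while the rank-one sorts recover $\Latt[1]\cong\K^{\times}/\Val^{\times}\cong\Gamma$ and $\Tor[1]\cong\K^{\times}/(1+\Mid)$, i.e.\ the value group and the leading-term structure. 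This identification is a direct linear-algebra computation that I would carry out explicitly.

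The step I expect to be the main obstacle is the coding of germs of definable functions together with the descent of the resulting code into the geometric sorts over the canonical parameter of $X$. The difficulty is the genuinely unstable interaction of the three kinds of data $\K$, $k$ and $\Gamma$: one cannot simply quote stable elimination of imaginaries, but must use the orthogonality and domination apparatus to reduce to the stable and o-minimal parts and then reassemble a single geometric code. Producing the requisite invariant types and the domination statement---rather than the one-variable ball-coding, which is essentially concrete---is where the real work lies.
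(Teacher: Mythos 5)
The paper does not prove this statement: it is quoted verbatim as \cite[Theorem 1.0.1]{HasHruMac-ACVF}, so there is no internal proof to compare yours against. Judged on its own terms, your outline is a faithful summary of the actual strategy of Haskell--Hrushovski--Macpherson: the split into coding finite sets plus weak elimination of imaginaries, the induction on the number of field variables via coding of germs of definable functions at invariant types, the use of stable domination and the orthogonality of the residue field and value group, and the one-variable base case where closed and open balls are coded in the sorts \(\Latt[2]\) and \(\Tor[2]\) respectively. But it remains an outline, not a proof: the two steps you yourself flag as the main obstacles --- the coding of germs of geometric-sort-valued functions and the descent of the resulting code into the definable closure of the canonical parameter --- are precisely where the bulk of the HHM argument lives (their chapter on invariant types, sequential independence and germs), and you do not carry them out. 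In the context of this note, the appropriate move is simply to cite the theorem, as the author does; if you intend to actually supply a proof, you would need to execute the germ-coding and domination machinery rather than gesture at it.
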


In \cite{HilKamRid}, it is shown that this results extends to the separably closed setting. Since this the most general result, we will be working with commuting Hasse derivations.

\begin{definition}(Iterative Hasse derivations)
An iterative Hasse derivation on a field \(K\) is a sequence of linear operators \((D_i)_{i\in\Zz_{\geq 0}}\) such that, for all \(i\), \(j\in\Zz_{\geq 0}\):
\begin{thm@enum}
\item \(D_0(x) = x\);
\item \(D_i(x y) = \sum_{j+k = i} D_j(x)D_k(y)\);
\item \(D_{i+j}(x) = \binom{i+j}{i}D_i(D_j(x))\).
\end{thm@enum}

Two Hasse derivations \(D_1 = (D_{1,i})_{i\geq 0}\) and \(D_2 = (D_{2,i})_{i\geq 0}\) are said to commute if for all \(i\), \(j\geq 0\), \(D_{1,i}\comp D_{2,j} = D_{2,j}\comp D_{1,i}\).
\end{definition}

Let \(p\) be a prime and \(e\) be non-negative, the theory \(\SCVHG[p,e]\) denotes the theory of separably closed valued fields with \(e\) commuting iterative Hasse derivations, such that \([\K:\K<p>] = p^e\) and \(\K<p> = \{x\in \K\mid \forall n \leq e,\,D_{n,1}(x) = 0\}\), in the language \(\LGD := \LG \cup\{D_{n,i}\mid 0 < n \leq e, i\geq 0\}\).

\begin{theorem}[fact SCVH](\cite[Corollary\,4.20 and Proposition\,5.5]{HilKamRid})
\begin{thm@enum}
\item\label{EI SCVH} The theory \(\SCVHG[p,e]\) eliminates imaginaries;
\item\label{dcl SCVH} For all \(A\substr M\models \SCVHG[p,e]\), \(\dcl[\SCVHG[p,e]](A) = \dcl[\ACVF](A)\).
\end{thm@enum}
\end{theorem}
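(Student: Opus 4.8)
The plan is to deduce both assertions from the corresponding facts about algebraically closed valued fields, the point being that in a model of \(\SCVHG[p,e]\) the Hasse derivations render the inseparable (Frobenius) structure of the field definable. Fix \(M\models\SCVHG[p,e]\) with field sort \(\K\), and let \(M^{\mathrm{alg}}\) be its field-theoretic algebraic closure; since \(M\) is separably closed, \(M^{\mathrm{alg}}\) is its perfect closure, and, equipped with an extension of the valuation to the geometric sorts, \(M^{\mathrm{alg}}\models\ACVFG\). Both parts rest on a single device: a pro-\(\LGD\)-definable system of Hasse coordinates that identifies \(M\) with an explicit pro-definable part of \(M^{\mathrm{alg}}\).

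First I would set up that coordinatization. The hypothesis \([\K:\K^p]=p^e\) together with \(\K^p=\{x:\forall n\le e,\ D_{n,1}(x)=0\}\) exhibits a definable \(p\)-basis \(b_1,\dots,b_e\); one checks that every \(x\in\K\) has a unique expansion \(x=\sum_m \lambda_m(x)^p\,b^m\) over the monomials \(b^m\) of exponent \(<p\), with the coordinate maps \(\lambda_m\) being \(\LGD\)-definable. Iterating this and using the iterativity relation \(D_{i+j}=\binom{i+j}{i}\,D_i\circ D_j\) produces, for every level \(n\), a definable section of the \(n\)-th power of Frobenius on its image --- equivalently, \(\LGD\)-definable access to the coordinates of \(x\) over \(M^{p^n}\). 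The finiteness of the imperfection degree is exactly what keeps each coordinate tuple finite, so the whole system is a genuine pro-definable object.

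For \ref{dcl SCVH} I would read off interdefinability from this system. On the one hand each \(\lambda_m\), being the inverse of Frobenius on a definable set, is \(\ACVF\)-definable, so the entire \(\LGD\)-structure on \(M\) is \(\ACVF\)-definable and \(\dcl[\SCVHG[p,e]](A)\subseteq\dcl[\ACVF](A)\); this is the inclusion used downstream to verify the hypotheses of \ref{embed group}. On the other hand, the coordinates let an \(\LGD\)-formula simulate quantification over \(M^{\mathrm{alg}}\) --- every element of the algebraic closure is coded by a finite coordinate tuple at some level --- so any \(\ACVF\)-definable object over \(A\) is already \(\LGD\)-definable over \(A\), giving the reverse inclusion.

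Finally, for \ref{EI SCVH} I would transfer the Haskell--Hrushovski--Macpherson elimination of imaginaries for \(\ACVFG\) stated above along the same coordinatization: push an \(\LGD\)-interpretable set through the Hasse coordinates into \(M^{\mathrm{alg}}\), code the resulting \(\ACVFG\)-imaginary by a tuple from the geometric sorts, and verify that this code can be chosen inside the \(\SCVHG[p,e]\)-structure. Rather than embedding definable sets directly, I would organise this through a coding criterion for elimination of imaginaries (coding, in the geometric sorts, the invariant or definable extensions of types over definably closed sets), which reduces matters to comparing the relevant automorphism orbits on \(M\) and on \(M^{\mathrm{alg}}\). The main obstacle I anticipate is precisely the interaction between the valuation and the imperfection data: one must check that the geometric sorts, which are built purely from the valued-field structure, absorb the extra Hasse-coordinate information uniformly across all levels \(n\), rather than spawning genuinely new imaginaries attached to the derivations or leaking unbounded data into the value group or residue field. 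Controlling this uniformity is where the substance of \cite{HilKamRid} lies.
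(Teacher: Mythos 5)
A preliminary remark: the paper offers no proof of this statement --- it is imported as a black box from \cite{HilKamRid} (Corollary\,4.20 for \ref{EI SCVH}, Proposition\,5.5 for \ref{dcl SCVH}) --- so there is no in-paper argument to measure your attempt against; what you have written is necessarily a sketch of the proof in the cited reference. As a sketch it identifies the right ingredients: the $\lambda$-function coordinatization afforded by $[\K:\K<p>]=p^e$ and the iterativity of the Hasse derivations, the resulting interpretation of $\alg{M}$ in $M$, and the fact that elimination of imaginaries is obtained via an abstract criterion about coding definable types in the geometric sorts rather than by coding definable sets directly.

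As a proof, however, it has genuine gaps, only some of which you flag. For \ref{EI SCVH} the entire substance is deferred: the criterion you invoke requires the density of $\LGD(\acl[\LGD](A))$-definable types in $\SCVHG[p,e]$ (a hard theorem of \cite{HilKamRid}, used independently in this paper in the proof of \ref{ext def}) together with the verification that germs of $\LGD$-definable functions on such types are coded in the geometric sorts; neither step is carried out, and the second is exactly where the valuation interacts with the $\lambda$-functions. For \ref{dcl SCVH} the argument is imprecise on two counts. First, $\dcl[\ACVF](A)$ is computed in $\alg{M}$ and contains, for instance, $p^n$-th roots of field points of $A$, which do not lie in $M$; the statement is only coherent because $A\substr M$ is an $\LGD$-substructure, hence already closed under the derivations (and so, over a $p$-basis, under the $\lambda$-functions) --- a hypothesis your sketch never uses. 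Second, you put the weight on the wrong inclusion: $\dcl[\ACVF](A)\subseteq\dcl[\SCVHG[p,e]](A)$ is the soft direction, since every $\LGD$-automorphism of $M$ over $A$ extends uniquely to an $\LG$-automorphism of the perfect hull $\alg{M}$ over $A$, including on the geometric sorts. The substantive direction, $\dcl[\SCVHG[p,e]](A)\subseteq\dcl[\ACVF](A)$ --- the one actually needed for Hypothesis\,\ref{hyp:tame dcl} in the proof of \ref{gp SCVH} --- does not follow from the $\lambda_m$ being ``inverses of Frobenius'': inverting Frobenius in $\alg{M}$ yields $x^{1/p}$, and extracting its coordinates relative to a basis of $\K^{1/p}$ over $\K$ uses the predicate for $M$, because the expansion $x=\sum_m\lambda_m(x)^p b^m$ loses uniqueness over $\alg{M}$. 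Indeed one can build automorphisms of $\alg{M}$ fixing $x$ and the $p$-basis while moving $\lambda_m(x)$, so the $\lambda_m$ are not $\ACVF$-definable in the naive sense, and a genuine analysis of $\LGD$-definable functions over $D$-closed parameter sets is required.
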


Let us now show how to extend definable types over \(M\models\SCVHG[p,e]\) to definable types over \(\alg{M}\).

\begin{lemma}[ext def]
Let \(M\models\SCVHG[p,e]\) and \(p\in\TP<\LGD>(M)\) be \(\LGD(M)\)-definable. Let \(\LP := \LGD\cup\{P\}\). We make \(\alg{M}\) into an \(\LP\)-structure by interpreting \(\LG\) in \(\alg{M}\), the predicate \(P\) as \(M\) and the functions \(D_{i,n}\) in \(M\). Then there exits \(q\in\TP<\LG>(\alg{M})\) which is \(\LP(M)\)-definable and such that \(p\vdash q\).
\end{lemma}

\begin{proof}
Let \(a\models p\) and \(a' := \acl[\SCVHG[p,e]](a)\). Note that \(\tp[\LGD](a'/M)\) is also \(\LGD(M)\)-definable. Moreover, there exists a definable map \(f\) whose domain \(\K<n>\) such that \(a\in f(\K<n>)\). By density of definable types in \(\SCVHG[p,e]\) (cf. \cite[Theorem\,4.19]{HilKamRid}), there exists an \(\LGD(a')\)-definable \(\LGD\)-type \(r\) such that \(r(x)\vdash f(x) = a\). We have that \(\restr{r}{M}\) is \(\LGD(M)\)-definable and if we find \(s\in\TP<\LG>(\alg{M})\) which is \(\LP(M)\)-definable and such that \(\restr{r}{M}\vdash s\). then \(q := \push{f}{s}\) has the required property.

So we may assume that \(p\) concentrates on \(\K<n>\) for some \(n\). We proceed by induction on \(n\). Let \((a,c)\models p\) where \(\card{c} = 1\). By induction, there exists \(q\in\TP<\LG>(\alg{M})\) which is \(\LP(M)\)-definable and such that \(\tp[\LGD](a/M)\vdash q\). Let \(\Balls\) be the set of balls (open and closed, with radius in \(\Gamma\cup\{-\infty,+\infty\}\)), \(\Balls<[l]>\) the set of subsets of \(\Balls\) of cardinality at most \(l\) and \(\Balls[n]<[l]>(M)\) the set of \(\LG(M)\)-definable maps \(f : \K<n>\to\Balls<[l]>\).

Let \(E := \{f \in \Balls[n-1]<[l]>(M)\mid c\in \bigcup_{b\in f(a)}b\}\). Then the generic type of \(E\) over \(q\), \(\Gen{E}[q](x,y) := q(x)\cup\{y\in\bigcup_{b\in f(x)}b \mid f\in E\}\cup \{y\nin\bigcup_{b\in g(x)}b \mid g\in\Balls[n-1]<[l]>(M)\text{ and }\forall f\in E,\,q(x)\vdash \bigcup_{b\in g(x)}b \subset \bigcup_{b\in f(x)}b\}\) is \(\LP(M)\)-definable and, by density of \(\K(M)\) in \(\K(\alg{M})\), \((a,c)\models\Gen{E}[p]\).
\end{proof}

We can now describe groups with definable generics in \(\SCVHG[p,e]\) in terms of groups definable in the algebraic closure.

\begin{theorem}[gp SCVH]
Let \(A \substr M\models\SCVHG[p,e]\) and let \(G\) be an \(\LGD(A)\)-interpretable group with a complete \(d\)-generic over \(A\). Then, there exists an \(\LG(A)\)-definable group \(H\) (defined in \(\alg{M}\)) and an \(\LGD(A)\)-definable group embedding \(f : G(M) \to H(M)\).
\end{theorem}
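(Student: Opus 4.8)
The plan is to verify that the hypotheses of \ref{embed group} are satisfied, with $\tL := \LGD$, $\LL := \LG$, $\tT := \SCVHG[p,e]$ and $T := \ACVFG$. First I would address the fact that \ref{embed group} is stated for definable groups whereas $G$ is only \emph{interpretable}. Since $\SCVHG[p,e]$ eliminates imaginaries by \ref{fact SCVH}.\ref{EI SCVH}, we may replace $G$ by a definable group and a definable generic. The parameter set condition $\Real(\dcltL(\tA)) = \Real(\tA)$ should be arranged by taking $\tA$ to be a suitable set defining $G$; here the key input is \ref{fact SCVH}.\ref{dcl SCVH}, namely $\dcl[\SCVHG[p,e]](\tA) = \dcl[\ACVFG](\tA)$, which controls how the definable closure in the Hasse-derivation language compares with that in the pure valued-field language.

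Next I would check Hypothesis \ref{hyp:gen}: this is immediate since $G$ is assumed to carry a complete $d$-generic over $A$, which I would extend to a $d$-generic over $\acl$-parameters as needed. For Hypothesis \ref{hyp:tame dcl}, the point is that the group operations of $G$, while $\LGD$-definable, should be pushed through a one-to-one $\LGD(\tA)$-definable map $f$ so that multiplication and inversion become $\LG(A)$-definable. This is exactly the sort of statement that \ref{fact SCVH}.\ref{dcl SCVH} makes possible: since $\dcl[\SCVHG[p,e]]$ is bounded by $\dcl[\ACVFG]$, one uses the alternative form (ii') from the remark after \ref{embed group}, checking that $f(g_1\cdot g_2)\in\dclLL(A,f(g_1),f(g_2))$ and $f(g_1^{-1})\in\dclLL(A,f(g_1))$, and then invokes compactness to produce the actual functions $m$ and $i$. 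Hypothesis \ref{hyp:descent} is handled by \ref{fact SCVH}.\ref{dcl SCVH} together with elimination of imaginaries: every element of $\dcl[\LG](M)$ is interdefinable with a tuple from $M$ itself.

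The main obstacle, and the reason the preliminary \ref{ext def} was proved, is Hypothesis \ref{hyp:ext def}: given an $\LGD(\tM)$-definable type $p$, one must produce an $\LG(\tM)$-definable type $q_p$ over $\alg{\tM}$ whose restriction to $\LG$ agrees with $\restr{p}{\LG}$, and this assignment must be equivariant under automorphisms. The existence of $q_p$ is precisely \ref{ext def}, which builds the extension by induction on the number of variables, using density of definable types in $\SCVHG[p,e]$ and density of $\K(M)$ in $\K(\alg{M})$ to control one coordinate at a time via generic types over balls. I expect the delicate part to be the \emph{moreover} clause: verifying that the construction commutes with pushforwards along $\LG$-definable functions and with the simultaneous action of $\aut[\LGD](\tM)$ and its lifts to $\aut[\LG](\alg{M})$. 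Since the type $q_p$ in \ref{ext def} is defined canonically (as a generic type determined by the balls containing the relevant coordinates), this equivariance should follow from the canonicity of the construction, but it requires care to track how automorphisms act on the sets $E$ and the generic types $\Gen{E}[q]$.

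Once all four hypotheses are in place, I would simply apply \ref{embed group} to obtain an $\LG(A)$-definable group $H$ in $\alg{M}$ and an $\LGD(A)$-definable one-to-one group morphism $G(M)\to H(M)$, which is the desired embedding.
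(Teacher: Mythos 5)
Your overall strategy coincides with the paper's: use elimination of imaginaries (\ref{EI SCVH}) to replace the interpretable group by a definable one, then verify the hypotheses of \ref{embed group} with $\tL=\LGD$, $\LL=\LG$, getting Hypothesis\,\ref{hyp:tame dcl} from Theorem\,\ref{dcl SCVH} and Hypothesis\,\ref{hyp:ext def} from \ref{ext def}. Two of your justifications do not go through as stated, however. For Hypothesis\,\ref{hyp:descent} you appeal to Theorem\,\ref{dcl SCVH}, but that result compares the two definable closures of subsets \emph{of $M$}, whereas \ref{hyp:descent} is about $\dcl[\ACVF](M)$ computed in $\alg{M}$, which on the field sort contains all of $\alg{M}$ (every element of $\alg{M}=M^{\mathrm{perf}}$ is purely inseparable over $M$, hence definable over it) and so is certainly not contained in $M$. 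The correct argument, which is the one the paper gives, is direct: a field element $e\in\alg{M}$ satisfies $e^{p^{n}}\in M$ for some $n$ and is interdefinable with $e^{p^{n}}$ via the Frobenius, while on the purely geometric sorts density of $M$ in $\alg{M}$ ensures the two fields have the same points. This is easily repaired, but the fact you need is not the one you cite.

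The more substantive gap is in Hypothesis\,\ref{hyp:ext def}. You write that ``the existence of $q_p$ is precisely \ref{ext def},'' but \ref{ext def} only produces a type over $\alg{M}$ that is $\LP(M)$-definable, i.e.\ whose defining scheme may use the predicate $P$ for $M$ in the pair language $\LP=\LGD\cup\{P\}$. Hypothesis\,\ref{hyp:ext def} requires $q_p$ to be definable by genuine $\LG$-formulas, and passing from pair-definability to $\LG$-definability is not formal: the paper invokes \cite[Corollary\,1.7]{RidSim-NIP} together with \cite[Proposition\,4.16]{HilKamRid} to upgrade $\LP(M)$-definability of $q_p$ to $\LG(\alg{M})$-definability. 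Your proposal omits this step entirely, and without it the verification of Hypothesis\,\ref{hyp:ext def} is incomplete. On the remaining point you flag as delicate --- equivariance of $p\mapsto q_p$ under automorphisms and compatibility with push-forwards --- your instinct matches the paper, which deduces it from $p\vdash q_p$ and the canonicity of the construction.
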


\begin{proof}
By Theorem\,\ref{EI SCVH}, the group \(G\) can be assumed to be \(\LGD(A)\)-definable. We now want to apply \ref{embed group}. Hypothesis\,\ref{hyp:tame dcl} follows from Theorem\,\ref{dcl SCVH} and, if we consider the pair \((\alg{M},M)\), Hypothesis\,\ref{hyp:descent} follows from the existence of the Frobenius automorphism on the field sort and the fact that, by density, both valued fields have the same purely geometric sorts.

There remains to prove Hypothesis\,\ref{hyp:ext def}. Pick an \(\LGD(M)\)-definable type \(p\in\TP<\LGD>(M)\). Let \(q_p\in\TP<\LG>(\alg{M})\) be given by \ref{ext def}. By \cite[Corollary\,1.7]{RidSim-NIP} and \cite[Proposition\,4.16]{HilKamRid}, we know that \(q_p\) is \(\LG(\alg{M})\)-definable. Since \(p\vdash q_p\), one can easily check that the construction of \(q_p\) is compatible with push-forwards by definable functions and the action of \(\aut[\LP](\alg{M})\).
\end{proof}

There are two natural improvements that could be expected of the previous theorem. First, can anything be said of groups definable in \(\SCVHG[p,e]\) that do not have definable generics. Although the general case seems quite out of reach, one could study weaker notions of genericity:

\begin{question}
Does \ref{gp SCVH} hold of groups with \(f\)-generics (equivalently, definably amenable groups)?
\end{question}

The second potential improvement comes from the non valued case. In \cite{BouDel-SCFGp}, Bouscaren and Delon show that a group definable in a separably closed valued fields of finite imperfection degree \(K\) is \emph{isomorphic} to the \(K\)-points of a group definable in \(\alg{K}\), i.e. an algebraic group. One can only wonder if the same is true in the valued setting:

\begin{question}
Can the embedding \(f\) in \ref{gp SCVH} be made surjective?
\end{question}

The method illustrated here could be applied to other settings, for example the model completion of valued differential fields (without any interaction). Some of the results regarding elimination of imaginaries in the geometric language, as well as the description of the definable closure are not known but they can be proven by adapting \cite{HilKamRid}.


\printsymbols
\printbibli

\begin{thebibliography}{HHM06}

\bibitem[Art70]{SGA-Art-Weil}
Michael Artin.
\newblock Th{\'e}or{\`e}me de weil sur la construction d'un groupe {\`a} partir
  d'une loi rationelle.
\newblock In Michel Demazure and Alexandre Grothendieck, editors, {\em
  S{\'e}minaire de G{\'e}om{\'e}trie Alg{\'e}brique du Bois Marie - 1962-64 -
  Sch{\'e}mas en groupes - (SGA 3)}, volume~2 of {\em Lecture Notes in Math.},
  pages 632--653. Springer-Verlag, 1970.

\bibitem[BD01]{BouDel-SCFGp}
{\'E}lisabeth Bouscaren and Fran{\c c}oise Delon.
\newblock Groups definable in separably closed fields.
\newblock {\em Trans. Amer. Math. Soc.}, 354(3):945--966, 2001.

\bibitem[HHM06]{HasHruMac-ACVF}
Deirdre Haskell, Ehud Hrushovski, and Dugald Macpherson.
\newblock Definable sets in algebraically closed valued fields: elimination of
  imaginaries.
\newblock {\em J. Reine Angew. Math.}, 597:175--236, 2006.

\bibitem[HKR]{HilKamRid}
Martin Hils, Moshe Kamensky, and Silvain Rideau.
\newblock Imaginaries in separably closed valued fields.
\newblock arXiv: 1612.02142.

\bibitem[HP11]{HruPil-Nip}
Ehud Hrushovski and Anand Pillay.
\newblock On {NIP} and invariant measures.
\newblock {\em J. Eur. Math. Soc. (JEMS)}, 13(4):1005--1061, 2011.

\bibitem[HR]{HruRid-Meta}
Ehud Hrushovski and Silvain Rideau.
\newblock Valued fields, metastable groups.
\newblock arXiv:1709.08801.

\bibitem[Hru90]{Hru-UniDim}
Ehud Hrushovski.
\newblock Unidimensional theories are superstable.
\newblock {\em Ann. Pure Appl. Logic}, 50(2):117--137, 1990.

\bibitem[NP06]{NewPet}
Ludomir Newelski and Marcin Petrykowski.
\newblock Weak generic types and coverings of groups. {I}.
\newblock {\em Fund. Math.}, 191(3):201--225, 2006.

\bibitem[Pil97]{Pil-DCFGp}
Anand Pillay.
\newblock Some foundational questions concerning differential algebraic groups.
\newblock {\em Pacific J. Math.}, 179(1):179--200, 1997.

\bibitem[RS]{RidSim-NIP}
Silvain Rideau and Pierre Simon.
\newblock Definable and invariant types in enrichments of {NIP} theories.
\newblock To appear in J. Symb. Log.

\bibitem[vdD90]{vdD-GpCh}
Lou van~den Dries.
\newblock Weil's group chunk theorem: a topological setting.
\newblock {\em Illinois J. Math.}, 34(1):127--139, 1990.

\bibitem[Wag00]{Wag-Simple}
Frank~O. Wagner.
\newblock {\em Simple theories}, volume 503 of {\em Mathematics and its
  Applications}.
\newblock Kluwer Academic Publishers, Dordrecht, 2000.

\bibitem[Wei55]{Wei-GpCh}
Andr{\'e} Weil.
\newblock On algebraic groups of transformations.
\newblock {\em Amer. J. Math.}, 77:355--391, 1955.

\end{thebibliography}
\end{document}